\newtheorem{definition}{Definition}[section]
\newtheorem{remark}[definition]{Remark}
\newtheorem{theorem}[definition]{Theorem}
\newtheorem{example}[definition]{Example}
\newcommand{\nat}{\mbox{$\;\natural \;$}}
\def\rawo\lonra{\longrightarrow}
\def\ot{\otimes}
\newcommand{\selabel}[1]{\label{se:#1}}
\newenvironment{proof}{{\it Proof.}}{\hfill $ \square $ \vskip 4mm}
\begin{document}
\title{L-R-crossed products}
\author {Florin Panaite\\
Institute of Mathematics of the 
Romanian Academy\\ 
PO-Box 1-764, RO-014700 Bucharest, Romania\\
e-mail: Florin.Panaite@imar.ro
}
\date{}
\maketitle

\begin{abstract}
Given an associative algebra $H$, a linear space $U$ and some linear maps $J$, $T$, $\gamma $, $\eta $ satisfying some axioms, 
we define an associative algebra structure on $U\otimes H$, called an L-R-crossed product. This contains as particular cases some 
previous constructions, such as the (iterated) Brzezi\'{n}ski crossed product and the L-R-smash 
product over quasi-bialgebras. \\
\textbf{Keywords}: twisted tensor product, L-R-smash product, Brzezi\'{n}ski crossed product \\
\textbf{MSC2020}: 16S35, 16T99
\end{abstract}
\section*{Introduction}
${\;\;\;\;}$
There exist in the literature several constructions of the following type: given an associative algebra $H$ and a linear space $U$, together with some extra data, an associative algebra structure on $U\otimes H$ (or on $H\otimes U$) is built. Clearly, the easiest example is the usual tensor product of associative algebras. A generalization of it, called twisted tensor product of algebras, 
was introduced in \cite{Cap}, \cite{VanDaele}: one starts with two associative algebras $A$ and $B$ and a linear map $R:B\otimes A\rightarrow A\otimes B$ satisfying certain axioms and one obtains an associative algebra structure on $A\otimes B$, denoted by 
$A\otimes _RB$. Twisted tensor products of algebras have various applications and have been studied intensively in the last decades, see for instance \cite{gustafson}, \cite{jlpvo}, \cite{ocal}
and references therein. A slight generalization of twisted tensor products of algebras was introduced in \cite{ciungu} under the name 
L-R-twisted tensor product of algebras (it involves an extra map $Q:A\otimes B\rightarrow A\otimes B$). A substantial 
generalization is the so called Brzezi\'{n}ski crossed product introduced in \cite{brz} (it contains as well as particular case the Hopf crossed product). In a Brzezi\'{n}ski crossed product only one of the tensor factors is an associative algebra, the other one is just a linear space. One can introduce as well a mirror version of the Brzezi\'{n}ski crossed product, and it turns out that under certain circumstances Brzezi\'{n}ski crossed products can be iterated, see \cite{iterated}. 

Another example of obtaining an associative algebra structure by tensoring an associative algebra with a linear space not carrying an associative algebra structure is the so-called L-R-smash product $\mathcal{A}\nat H$, where $H$ is a quasi-bialgebra and 
$\mathcal{A}$ is an $H$-bimodule algebra, introduced in \cite{pvo}. When $H$ is moreover a quasi-Hopf algebra, $\mathcal{A}\nat H$ is isomorphic to a construction previously introduced in \cite{hn}, \cite{bpvo} under the name diagonal crossed product, denoted by 
$\mathcal{A}\bowtie H$ (this can be defined only for quasi-Hopf algebras, not for quasi-bialgebras); we recall that if $H$ is finite 
dimensional, $H^*\bowtie H$ is the algebra structure of the quantum double of $H$. 

The aim of this paper is to introduce a general construction, called L-R-crossed product, as follows. Given an associative algebra $H$, a linear space $U$ and linear maps $J:H\ot U\rightarrow U\ot H$, 
$T:U\ot H\rightarrow U\ot H$, 
$\gamma : U\ot U\rightarrow U\ot U\ot H$, 
$\eta : U\ot U\rightarrow U\ot H\ot H$ satisfying the axioms (\ref{lrc1})-(\ref{lrc15}) below, we define an associative 
algebra structure on $U\otimes H$, denoted by $U\nat _{J, T, \gamma , \eta }\;H$. It turns out that all the constructions mentioned above are particular cases of this new construction, which can thus be regarded as a common generalization and unification of the previous constructions. 

Since our construction generalizes both L-R-smash products and Brzezi\'{n}ski crossed products, we believe that the name 
L-R-crossed product is justified. However, some terminological confusion might arise, because there exists another construction called L-R-crossed product, introduced in \cite{chenwang} (it contains as particular cases the Hopf crossed product and the L-R-smash product over Hopf algebras, but not over quasi-bialgebras or quasi-Hopf algebras). It turns out that our L-R-crossed product and the one in \cite{chenwang} are independent constructions, none of them is a particular case of the other (although our L-R-crossed product is not far from containing the one in \cite{chenwang} as a particular case). Whether one can define an even more general construction, containing both L-R-crossed products as particular cases, remains an open question. 

\section{The definition of the L-R-crossed product}\selabel{2}
\setcounter{equation}{0}
${\;\;\;\;}$
We work over a commutative field $k$. All algebras, linear spaces
etc. will be over $k$; unadorned $\ot $ means $\ot_k$. By ''algebra'' we 
always mean an associative unital algebra. The multiplication 
of an algebra $A$ is denoted by $\mu _A$ or simply $\mu $ when 
there is no danger of confusion, and we usually denote 
$\mu _A(a\ot a')=aa'$ for all $a, a'\in A$; the unit of $A$ is denoted by $1_A$.  

\begin{theorem}
Let $(H, \mu _H, 1_H)$ be an (associative unital) algebra and $U$ a vector space equipped with a 
distinguished nonzero element $1_U\in U$. Assume that we are given linear maps (with respective notation)
\begin{eqnarray*}
&&J:H\ot U\rightarrow U\ot H, \;\;\;J(h\ot u)=u_J\ot h_J, \\
&&T:U\ot H\rightarrow U\ot H, \;\;\;T(u\ot h)=u_T\ot h_T, \\
&&\gamma : U\ot U\rightarrow U\ot U\ot H, \;\;\;\gamma (u\ot u')=\gamma _1(u, u')\ot 
\gamma _2(u, u')\ot \gamma _3(u, u'), \\
&&\eta : U\ot U\rightarrow U\ot H\ot H, \;\;\;\eta (u\ot u')=\eta _1(u, u')\ot 
\eta _2(u, u')\ot \eta _3(u, u').
\end{eqnarray*}
Assume that the following conditions are satisfied, for all $u, u', u''\in U$ and $h, h'\in H$ (we denote 
by $j$ and $t$ some copies of $J$ and respectively $T$):
\begin{eqnarray}
&&J(h\ot 1_U)=1_U\ot h, \;\;\;J(1_H\ot u)=u\ot 1_H, \label{lrc1} \\[2mm]
&&T(u\ot 1_H)=u\ot 1_H, \;\;\;T(1_U\ot h)=1_U\ot h, \label{lrc2}\\[2mm]
&&\gamma (u\ot 1_U)=u\ot 1_U\ot 1_H, \;\;\;\gamma (1_U\ot u)=1_U\ot u\ot 1_H, \label{lrc3} \\[2mm]
&&\eta (u\ot 1_U)=\eta (1_U\ot u)=u\ot 1_H\ot 1_H, \label{lrc4}\\[2mm]
&&u_J\ot (hh')_J=(u_J)_j\ot h_jh'_J, \label{lrc5} \\[2mm]
&&u_T\ot (hh')_T=(u_T)_t\ot h_Th'_t, \label{lrc6}\\[2mm]
&&\eta _1(u_J, u'_j)\ot \eta _2(u_J, u'_j)(h_J)_j\ot \eta _3(u_J, u'_j)=
\eta _1(u, u')_J\ot h_J\eta _2(u, u')\ot \eta _3(u, u'), \label{lrc9}\\[2mm]
&&\eta _1(u_t, u'_T)\ot \eta _2(u_t, u'_T)\ot (h_T)_t\eta _3(u_t, u'_T)=
\eta _1(u, u')_T\ot \eta _2(u, u')\ot \eta _3(u, u')h_T, \label{lrc10}\\[2mm]
&&\eta _1(\eta _1(u, u'), u''_J)\ot \eta _2(\eta _1(u, u'), u''_J)\eta _2(u, u')_J\ot 
\eta _3(u, u')\eta _3(\eta _1(u, u'), u''_J)\nonumber \\[2mm]
&&\;\;\;\;=\eta _1(u_T, \eta _1(u', u''))\ot \eta _2(u_T, \eta _1(u', u''))\eta _2(u', u'')\nonumber \\[2mm]
&&\;\;\;\;\;\;\;\;\;\ot 
\eta _3(u', u'')_T\eta _3(u_T, \eta _1(u', u'')), \label{lrc11}\\[2mm]
&&\gamma _1(u, u'_J)_T\ot \gamma _2(u, u'_J)\ot \gamma _3(u, u'_J)(h_J)_T\nonumber \\[2mm]
&&\;\;\;\;=\gamma _1(u_T, u')\ot \gamma _2(u_T, u')_J\ot (h_T)_J\gamma _3(u_T, u'), \label{lrc12}\\[2mm]
&&\gamma _1(u, \eta _1(u', u''))_T\ot \gamma _2(u, \eta _1(u', u''))\ot 
\gamma _3(u, \eta _1(u', u''))\eta _2(u', u'')_T\ot \eta _3(u', u'')\nonumber \\[2mm]
&&\;\;\;\;=\gamma _1(\gamma _1(u, u'), u'')\ot \eta _1(\gamma _2(u, u'), 
\gamma _2(\gamma _1(u, u'), u'')_J)\nonumber \\[2mm]
&&\;\;\;\;\;\;\;\;\;\ot \eta _2(\gamma _2(u, u'), \gamma _2(\gamma _1(u, u'), u'')_J)\gamma _3(u, u')_J
\gamma _3(\gamma _1(u, u'), u'')\nonumber \\[2mm]
&&\;\;\;\;\;\;\;\;\;\;\;\;\;\;\ot  \eta _3(\gamma _2(u, u'), \gamma _2(\gamma _1(u, u'), u'')_J), \label{lrc13}\\[2mm]
&&\gamma _1(\eta _1(u, u'), u'')\ot \gamma _2(\eta _1(u, u'), u'')_J\ot \eta _2(u, u')\ot 
\eta _3(u, u')_J\gamma _3(\eta _1(u, u'), u'')\nonumber \\[2mm]
&&\;\;\;\;=\eta _1(\gamma _1(u, \gamma _2(u', u''))_T, \gamma _1(u', u''))
\ot \gamma _2(u, \gamma _2(u', u''))\nonumber \\[2mm]
&&\;\;\;\;\;\;\;\;\ot \eta _2(\gamma _1(u, \gamma _2(u', u''))_T, \gamma _1(u', u''))\nonumber \\[2mm]
&&\;\;\;\;\;\;\;\;\ot 
\gamma _3(u, \gamma _2(u', u''))\gamma _3(u', u'')_T\eta _3(\gamma _1(u, \gamma _2(u', u''))_T, \gamma _1(u', u'')),
\label{lrc14} \\[2mm]
&&\gamma _1(u, \gamma _1(u', u'')_T)\ot \gamma _2(u, \gamma _1(u', u'')_T)_J\ot 
h'_J\gamma _3(u, \gamma _1(u', u'')_T)\nonumber \\[2mm]
&&\;\;\;\;\;\;\;\;\;\;\;\;\;\;\;\;\ot \gamma _2(u', u'')\ot \gamma _3(u', u'')h_T\nonumber \\[2mm]
&&\;\;\;\;=\gamma _1(u, u')\ot \gamma _1(\gamma _2(u, u')_J, u'')_T\ot h'_J\gamma _3(u, u')\nonumber \\[2mm]
&&\;\;\;\;\;\;\;\;\;\;\;\;\;\;\;\;\ot \gamma _2(\gamma _2(u, u')_J, u'')\ot \gamma _3(\gamma _2(u, u')_J, u'')
h_T. \label{lrc15}
\end{eqnarray}
If we define on $U\ot H$ a multiplication, by 
\begin{eqnarray*}
&&(u\ot h)(u'\ot h')=\eta _1(\gamma _1(u, u')_T, \gamma _2(u, u')_J)\\[1mm]
&&\;\;\;\;\;\;\;\;\;\;\;\;\;\;\;\;\;\;\;\;\;\;\;\;\;\;\;\;\;\;\;\;\;\;\;\ot 
\eta _2(\gamma _1(u, u')_T, \gamma _2(u, u')_J)h_J\gamma _3(u, u')h'_T
\eta _3(\gamma _1(u, u')_T, \gamma _2(u, u')_J),
\end{eqnarray*}
for all $u, u'\in U$ and $h, h'\in H$, 
then this multiplication is associative and $1_U\ot 1_H$ is the unit. This algebra structure is called an 
{\em L-R-crossed product} and will be denoted by $U\nat _{J, T, \gamma , \eta }\;H$. We denote 
$u\nat h:=u\ot h$, for $u\in U$, $h\in H$. 
\end{theorem}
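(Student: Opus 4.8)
The plan is to dispatch the unit axioms first—these are immediate—and then to prove associativity by expanding both triple products into sums of elementary tensors and reconciling them via the compatibility conditions.

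\textbf{The unit.} Setting $u=1_U,h=1_H$ in the multiplication and using the normalizations: $\gamma(1_U\ot u')=1_U\ot u'\ot 1_H$ by (\ref{lrc3}), so $\gamma_1=1_U$, $\gamma_2=u'$, $\gamma_3=1_H$; then $T(1_U\ot h')=1_U\ot h'$ by (\ref{lrc2}) and $J(1_H\ot u')=u'\ot 1_H$ by (\ref{lrc1}); finally $\eta(1_U\ot u')=u'\ot 1_H\ot 1_H$ by (\ref{lrc4}). Substituting yields $(1_U\nat 1_H)(u'\nat h')=u'\nat h'$. The identity $(u\nat h)(1_U\nat 1_H)=u\nat h$ is obtained symmetrically from the other halves of (\ref{lrc1})--(\ref{lrc4}).

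\textbf{Associativity: the expansion.} Write $\gamma(u\ot u')=p\ot q\ot r$ with $p,q\in U$, $r\in H$, so that, by definition, $(u\nat h)(u'\nat h')=\eta_1(p_T,q_J)\nat\eta_2(p_T,q_J)\,h_J\,r\,h'_T\,\eta_3(p_T,q_J)$, where $T$ acts on $p\ot h'$ and $J$ acts on $h\ot q$. The strategy is to show that both $\big((u\nat h)(u'\nat h')\big)(u''\nat h'')$ and $(u\nat h)\big((u'\nat h')(u''\nat h'')\big)$ expand to one and the same sum of elementary tensors $(\text{element of }U)\nat(\text{product in }H)$. On the left one first forms the inner product $X\nat Y$ with $X\in U$ and $Y$ a product of \emph{five} elements of $H$, and then applies $\gamma,T,J,\eta$ to $X\nat Y$ and $u''\nat h''$; the subtlety is that $J$ is now applied to $Y\ot(\text{something})$ with $Y$ a five-fold product, which must be broken into five successive applications of $J$ by iterating (\ref{lrc5}). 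Dually, on the right one forms the inner product $C\nat D$ and must apply $T$ to $(\text{something})\ot D$ with $D$ a five-fold product, broken up by iterating (\ref{lrc6}). This is why the paper introduces the extra copies $j,t$ of $J,T$.

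\textbf{Associativity: reconciling the two sides.} Once fully expanded, the two expressions are matched by invoking the remaining axioms, each of which licenses exactly one rearrangement: (\ref{lrc9}) and (\ref{lrc10}) pull the copies of $J$, respectively $T$, through the $\eta_2$-, respectively $\eta_3$-, correction factors sitting inside the iterated $H$-products; (\ref{lrc12}) interchanges $J$ and $T$ around the $\gamma$-values so that the ``L'' and ``R'' twists acting on different arguments land in compatible positions; (\ref{lrc11}) is the ``$\eta$-cocycle'' identity matching the $\eta$-contributions from nesting the last two factors on the left versus on the right, and (\ref{lrc15}) is the analogous ``$\gamma$-cocycle'' identity; and (\ref{lrc13}), (\ref{lrc14}) handle the mixed terms in which a $\gamma$-value is fed into an $\eta$-value and conversely. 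Applied in the appropriate order, these collapse both expansions to a common normal form. It is probably cleaner to organize this inductively: first check associativity with $\gamma,\eta$ trivial (only $J,T$ and the conditions (\ref{lrc5}), (\ref{lrc6}), (\ref{lrc12}) are in play), then switch on $\eta$ (bringing in (\ref{lrc9})--(\ref{lrc11})), and finally switch on $\gamma$ (bringing in (\ref{lrc13})--(\ref{lrc15})).

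\textbf{The main obstacle.} There is no hidden conceptual difficulty; the entire content is bookkeeping. The real work is tracking which of the several copies of $J$ and $T$ acts on which tensor slot after the five-fold $H$-products are decomposed via (\ref{lrc5})--(\ref{lrc6}), and then invoking (\ref{lrc9})--(\ref{lrc15}) in precisely the order that makes the two sides agree. Since each axiom is in effect reverse-engineered to authorize exactly one of the needed moves, the argument, though long, is forced at every step.
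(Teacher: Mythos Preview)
Your outline is correct and is exactly the paper's approach: the paper expands both triple products, uses (\ref{lrc5}) (resp.\ (\ref{lrc6})) to split the five-fold $J$ (resp.\ $T$) on the inner $H$-factor, and then reduces the left side via (\ref{lrc14}), (\ref{lrc10}), (\ref{lrc11}), (\ref{lrc5})+(\ref{lrc6}) and the right side via (\ref{lrc13}), (\ref{lrc12}), (\ref{lrc9}), (\ref{lrc5})+(\ref{lrc6}), (\ref{lrc15}) to a common normal form---precisely the role-assignment of axioms you describe. One caveat: your suggested ``inductive'' reorganization (trivial $\gamma,\eta$ first, then switch them on) is only a heuristic for spotting the pattern, not a logical reduction, since (\ref{lrc13})--(\ref{lrc15}) entangle $\gamma$, $\eta$, $J$, $T$ and the full computation must still be carried out in one pass; the paper does not attempt any such staging.
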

\begin{proof}
It is very easy to see that the relations (\ref{lrc1})-(\ref{lrc4}) imply that $1_U\nat 1_H$ is the unit. We check 
now the associativity (we denote by $T=t=\tau =\overline{\tau }=\overline{t}=\overline{T}$ and 
$J=j=i=I=\overline{J}=\overline{I}$ more copies of $T$ and $J$):\\[2mm]
${\;\;\;}$$[(u\nat h)(u'\nat h')](u''\nat h'')$
\begin{eqnarray*}
&=&[\eta _1(\gamma _1(u, u')_T, \gamma _2(u, u')_J)\\
&&\nat
\eta _2(\gamma _1(u, u')_T, \gamma _2(u, u')_J)h_J\gamma _3(u, u')h'_T
\eta _3(\gamma _1(u, u')_T, \gamma _2(u, u')_J)](u''\nat h'')\\
&=&\eta _1(\gamma _1(\eta _1(\gamma _1(u, u')_T, \gamma _2(u, u')_J), u'')_t, 
\gamma _2(\eta _1(\gamma _1(u, u')_T, \gamma _2(u, u')_J), u'')_j)\\
&&\nat \eta _2(\gamma _1(\eta _1(\gamma _1(u, u')_T, \gamma _2(u, u')_J), u'')_t, 
\gamma _2(\eta _1(\gamma _1(u, u')_T, \gamma _2(u, u')_J), u'')_j)\\
&&[\eta _2(\gamma _1(u, u')_T, \gamma _2(u, u')_J)h_J\gamma _3(u, u')h'_T
\eta _3(\gamma _1(u, u')_T, \gamma _2(u, u')_J)]_j\\
&&\gamma _3(\eta _1(\gamma _1(u, u')_T, \gamma _2(u, u')_J), u'')h''_t\\
&&\eta _3(\gamma _1(\eta _1(\gamma _1(u, u')_T, \gamma _2(u, u')_J), u'')_t, 
\gamma _2(\eta _1(\gamma _1(u, u')_T, \gamma _2(u, u')_J), u'')_j)\\
&\overset{(\ref{lrc5})}{=}&\eta _1(\gamma _1(\eta _1(\gamma _1(u, u')_T, \gamma _2(u, u')_J), u'')_t, 
\gamma _2(\eta _1(\gamma _1(u, u')_T, \gamma _2(u, u')_J), u'')_{j_{i_{I_{\overline{J}_{\overline{I}}}}}})\\
&&\nat \eta _2(\gamma _1(\eta _1(\gamma _1(u, u')_T, \gamma _2(u, u')_J), u'')_t, 
\gamma _2(\eta _1(\gamma _1(u, u')_T, \gamma _2(u, u')_J), u'')_{j_{i_{I_{\overline{J}_{\overline{I}}}}}})\\
&&\eta _2(\gamma _1(u, u')_T, \gamma _2(u, u')_J)_{\overline{I}}h_{J_{\overline{J}}}\gamma _3(u, u')_Ih'_{T_i}
\eta _3(\gamma _1(u, u')_T, \gamma _2(u, u')_J)_j\\
&&\gamma _3(\eta _1(\gamma _1(u, u')_T, \gamma _2(u, u')_J), u'')h''_t\\
&&\eta _3(\gamma _1(\eta _1(\gamma _1(u, u')_T, \gamma _2(u, u')_J), u'')_t, 
\gamma _2(\eta _1(\gamma _1(u, u')_T, \gamma _2(u, u')_J), u'')_{j_{i_{I_{\overline{J}_{\overline{I}}}}}})\\
&\overset{(\ref{lrc14})}{=}&\eta _1(\eta _1(\gamma _1(\gamma _1(u, u')_T, 
\gamma _2(\gamma _2(u, u')_J, u''))_{\tau }, 
\gamma _1(\gamma _2(u, u')_J, u''))_t, \\
&&\gamma _2(\gamma _1(u, u')_T, \gamma _2(\gamma _2(u, u')_J, u''))_{i_{I_{\overline{J}_{\overline{I}}}}})\\
&&\nat \eta _2(\eta _1(\gamma _1(\gamma _1(u, u')_T, 
\gamma _2(\gamma _2(u, u')_J, u''))_{\tau }, 
\gamma _1(\gamma _2(u, u')_J, u''))_t, \\
&&\gamma _2(\gamma _1(u, u')_T, \gamma _2(\gamma _2(u, u')_J, u''))_{i_{I_{\overline{J}_{\overline{I}}}}})\\
&&\eta _2(\gamma _1(\gamma _1(u, u')_T, \gamma _2(\gamma _2(u, u')_J, u''))_{\tau }, 
\gamma _1(\gamma _2(u, u')_J, u''))_{\overline{I}}h_{J_{\overline{J}}}\gamma _3(u, u')_Ih'_{T_i}\\
&&\gamma _3(\gamma _1(u, u')_T, 
\gamma _2(\gamma _2(u, u')_J, u''))\gamma _3(\gamma _2(u, u')_J, u'')_{\tau }\\
&&\eta _3(\gamma _1(\gamma _1(u, u')_T, 
\gamma _2(\gamma _2(u, u')_J, u''))_{\tau }, 
\gamma _1(\gamma _2(u, u')_J, u''))h''_t\\
&&\eta _3(\eta _1(\gamma _1(\gamma _1(u, u')_T, 
\gamma _2(\gamma _2(u, u')_J, u''))_{\tau }, 
\gamma _1(\gamma _2(u, u')_J, u''))_t, \\
&&\gamma _2(\gamma _1(u, u')_T, \gamma _2(\gamma _2(u, u')_J, u''))_{i_{I_{\overline{J}_{\overline{I}}}}})\\
&\overset{(\ref{lrc10})}{=}&\eta _1(\eta _1(\gamma _1(\gamma _1(u, u')_T, 
\gamma _2(\gamma _2(u, u')_J, u''))_{\tau _{\overline{\tau }}}, 
\gamma _1(\gamma _2(u, u')_J, u'')_t), \\
&&\gamma _2(\gamma _1(u, u')_T, \gamma _2(\gamma _2(u, u')_J, u''))_{i_{I_{\overline{J}_{\overline{I}}}}})\\
&&\nat \eta _2(\eta _1(\gamma _1(\gamma _1(u, u')_T, 
\gamma _2(\gamma _2(u, u')_J, u''))_{\tau _{\overline{\tau }}}, 
\gamma _1(\gamma _2(u, u')_J, u'')_t), \\
&&\gamma _2(\gamma _1(u, u')_T, \gamma _2(\gamma _2(u, u')_J, u''))_{i_{I_{\overline{J}_{\overline{I}}}}})\\
&&\eta _2(\gamma _1(\gamma _1(u, u')_T, \gamma _2(\gamma _2(u, u')_J, u''))_{\tau _{\overline{\tau }}}, 
\gamma _1(\gamma _2(u, u')_J, u'')_t)_{\overline{I}}h_{J_{\overline{J}}}\gamma _3(u, u')_Ih'_{T_i}\\
&&\gamma _3(\gamma _1(u, u')_T, 
\gamma _2(\gamma _2(u, u')_J, u''))\gamma _3(\gamma _2(u, u')_J, u'')_{\tau }h''_{t_{\overline{\tau }}}\\
&&\eta _3(\gamma _1(\gamma _1(u, u')_T, 
\gamma _2(\gamma _2(u, u')_J, u''))_{\tau _{\overline{\tau }}}, 
\gamma _1(\gamma _2(u, u')_J, u'')_t) \\
&&\eta _3(\eta _1(\gamma _1(\gamma _1(u, u')_T, 
\gamma _2(\gamma _2(u, u')_J, u''))_{\tau _{\overline{\tau }}}, 
\gamma _1(\gamma _2(u, u')_J, u'')_t), \\
&&\gamma _2(\gamma _1(u, u')_T, \gamma _2(\gamma _2(u, u')_J, u''))_{i_{I_{\overline{J}_{\overline{I}}}}})\\
&\overset{(\ref{lrc11})}{=}&\eta _1(\gamma _1(\gamma _1(u, u')_T, 
\gamma _2(\gamma _2(u, u')_J, u''))_{\tau _{\overline{\tau }_{\overline{T}}}}, \\
&&\eta _1(\gamma _1(\gamma _2(u, u')_J, u'')_t, 
\gamma _2(\gamma _1(u, u')_T, \gamma _2(\gamma _2(u, u')_J, u''))_{i_{I_{\overline{J}}}}))\\
&&\nat \eta _2(\gamma _1(\gamma _1(u, u')_T, 
\gamma _2(\gamma _2(u, u')_J, u''))_{\tau _{\overline{\tau }_{\overline{T}}}}, \\
&&\eta _1(\gamma _1(\gamma _2(u, u')_J, u'')_t, 
\gamma _2(\gamma _1(u, u')_T, \gamma _2(\gamma _2(u, u')_J, u''))_{i_{I_{\overline{J}}}}))\\
&&\eta _2(\gamma _1(\gamma _2(u, u')_J, u'')_t, 
\gamma _2(\gamma _1(u, u')_T, \gamma _2(\gamma _2(u, u')_J, u''))_{i_{I_{\overline{J}}}})\\
&&h_{J_{\overline{J}}}\gamma _3(u, u')_Ih'_{T_i}
\gamma _3(\gamma _1(u, u')_T, 
\gamma _2(\gamma _2(u, u')_J, u''))\gamma _3(\gamma _2(u, u')_J, u'')_{\tau }h''_{t_{\overline{\tau }}}\\
&&\eta _3(\gamma _1(\gamma _2(u, u')_J, u'')_t, 
\gamma _2(\gamma _1(u, u')_T, \gamma _2(\gamma _2(u, u')_J, u''))_{i_{I_{\overline{J}}}})_{\overline{T}}\\
&&\eta _3(\gamma _1(\gamma _1(u, u')_T, 
\gamma _2(\gamma _2(u, u')_J, u''))_{\tau _{\overline{\tau }_{\overline{T}}}}, \\
&&\eta _1(\gamma _1(\gamma _2(u, u')_J, u'')_t, 
\gamma _2(\gamma _1(u, u')_T, \gamma _2(\gamma _2(u, u')_J, u''))_{i_{I_{\overline{J}}}}))\\
&\overset{(\ref{lrc5}), \;(\ref{lrc6})}{=}&\eta _1(\gamma _1(\gamma _1(u, u')_T, 
\gamma _2(\gamma _2(u, u')_J, u''))_{\tau _{\overline{T}}}, \\
&&\eta _1(\gamma _1(\gamma _2(u, u')_J, u'')_t, 
\gamma _2(\gamma _1(u, u')_T, \gamma _2(\gamma _2(u, u')_J, u''))_{i_{I}}))\\
&&\nat \eta _2(\gamma _1(\gamma _1(u, u')_T, 
\gamma _2(\gamma _2(u, u')_J, u''))_{\tau _{\overline{T}}}, \\
&&\eta _1(\gamma _1(\gamma _2(u, u')_J, u'')_t, 
\gamma _2(\gamma _1(u, u')_T, \gamma _2(\gamma _2(u, u')_J, u''))_{i_{I}}))\\
&&\eta _2(\gamma _1(\gamma _2(u, u')_J, u'')_t, 
\gamma _2(\gamma _1(u, u')_T, \gamma _2(\gamma _2(u, u')_J, u''))_{i_{I}})\\
&&(h_{J}\gamma _3(u, u'))_Ih'_{T_i}
\gamma _3(\gamma _1(u, u')_T, 
\gamma _2(\gamma _2(u, u')_J, u''))(\gamma _3(\gamma _2(u, u')_J, u'')h''_t)_{\tau }\\
&&\eta _3(\gamma _1(\gamma _2(u, u')_J, u'')_t, 
\gamma _2(\gamma _1(u, u')_T, \gamma _2(\gamma _2(u, u')_J, u''))_{i_{I}})_{\overline{T}}\\
&&\eta _3(\gamma _1(\gamma _1(u, u')_T, 
\gamma _2(\gamma _2(u, u')_J, u''))_{\tau _{\overline{T}}}, \\
&&\eta _1(\gamma _1(\gamma _2(u, u')_J, u'')_t, 
\gamma _2(\gamma _1(u, u')_T, \gamma _2(\gamma _2(u, u')_J, u''))_{i_{I}})),
\end{eqnarray*}
${\;\;\;}$$(u\nat h)[(u'\nat h')(u''\nat h'')]$
\begin{eqnarray*}
&=&(u\nat h)[\eta _1(\gamma _1(u', u'')_T, \gamma _2(u', u'')_J)\\
&&\nat 
\eta _2(\gamma _1(u', u'')_T, \gamma _2(u', u'')_J)h'_J\gamma _3(u', u'')h''_T
\eta _3(\gamma _1(u', u'')_T, \gamma _2(u', u'')_J)]\\
&=&\eta _1(\gamma _1(u, \eta _1(\gamma _1(u', u'')_T, \gamma _2(u', u'')_J))_t, 
\gamma _2(u, \eta _1(\gamma _1(u', u'')_T, \gamma _2(u', u'')_J))_j)\\
&&\nat \eta _2(\gamma _1(u, \eta _1(\gamma _1(u', u'')_T, \gamma _2(u', u'')_J))_t, 
\gamma _2(u, \eta _1(\gamma _1(u', u'')_T, \gamma _2(u', u'')_J))_j)\\
&&h_j\gamma _3(u, \eta _1(\gamma _1(u', u'')_T, \gamma _2(u', u'')_J))\\
&&[\eta _2(\gamma _1(u', u'')_T, \gamma _2(u', u'')_J)h'_J\gamma _3(u', u'')h''_T
\eta _3(\gamma _1(u', u'')_T, \gamma _2(u', u'')_J)]_t\\
&&\eta _3(\gamma _1(u, \eta _1(\gamma _1(u', u'')_T, \gamma _2(u', u'')_J))_t, 
\gamma _2(u, \eta _1(\gamma _1(u', u'')_T, \gamma _2(u', u'')_J))_j)\\
&\overset{(\ref{lrc6})}{=}&
\eta _1(\gamma _1(u, \eta _1(\gamma _1(u', u'')_T, 
\gamma _2(u', u'')_J))_{t_{\overline{t}_{\tau _{\overline{\tau}_{\overline{T}}}}}}, 
\gamma _2(u, \eta _1(\gamma _1(u', u'')_T, \gamma _2(u', u'')_J))_j)\\
&&\nat \eta _2(\gamma _1(u, \eta _1(\gamma _1(u', u'')_T, \gamma _2(u', u'')_J))_{t_{\overline{t}_{\tau _{\overline{\tau}_{\overline{T}}}}}}, 
\gamma _2(u, \eta _1(\gamma _1(u', u'')_T, \gamma _2(u', u'')_J))_j)\\
&&h_j\gamma _3(u, \eta _1(\gamma _1(u', u'')_T, \gamma _2(u', u'')_J))\\
&&\eta _2(\gamma _1(u', u'')_T, \gamma _2(u', u'')_J)_th'_{J_{\overline{t}}}
\gamma _3(u', u'')_{\tau }h''_{T_{\overline{\tau }}}
\eta _3(\gamma _1(u', u'')_T, \gamma _2(u', u'')_J)_{\overline{T}}\\
&&\eta _3(\gamma _1(u, \eta _1(\gamma _1(u', u'')_T, \gamma _2(u', u'')_J))_{t_{\overline{t}_{\tau _{\overline{\tau}_{\overline{T}}}}}}, 
\gamma _2(u, \eta _1(\gamma _1(u', u'')_T, \gamma _2(u', u'')_J))_j)\\
&\overset{(\ref{lrc13})}{=}&
\eta _1(\gamma _1(\gamma _1(u, \gamma _1(u', u'')_T), 
\gamma _2(u', u'')_J)_{\overline{t}_{\tau _{\overline{\tau}_{\overline{T}}}}}, \\
&&\eta _1(\gamma _2(u, \gamma _1(u', u'')_T), \gamma _2(\gamma _1(u, \gamma _1(u', u'')_T), 
\gamma _2(u', u'')_J)_{\overline{J}})_j)\\
&&\nat \eta _2(\gamma _1(\gamma _1(u, \gamma _1(u', u'')_T), 
\gamma _2(u', u'')_J)_{\overline{t}_{\tau _{\overline{\tau}_{\overline{T}}}}}, \\
&&\eta _1(\gamma _2(u, \gamma _1(u', u'')_T), \gamma _2(\gamma _1(u, \gamma _1(u', u'')_T), 
\gamma _2(u', u'')_J)_{\overline{J}})_j)\\
&&h_j\eta _2(\gamma _2(u, \gamma _1(u', u'')_T), \gamma _2(\gamma _1(u, \gamma _1(u', u'')_T), 
\gamma _2(u', u'')_J)_{\overline{J}})\gamma _3(u, \gamma _1(u', u'')_T)_{\overline{J}}\\
&&\gamma _3(\gamma _1(u, \gamma _1(u', u'')_T), 
\gamma _2(u', u'')_J)h'_{J_{\overline{t}}}
\gamma _3(u', u'')_{\tau }h''_{T_{\overline{\tau }}}\\
&&\eta _3(\gamma _2(u, \gamma _1(u', u'')_T), \gamma _2(\gamma _1(u, \gamma _1(u', u'')_T), 
\gamma _2(u', u'')_J)_{\overline{J}})_{\overline{T}}\\
&&\eta _3(\gamma _1(\gamma _1(u, \gamma _1(u', u'')_T), 
\gamma _2(u', u'')_J)_{\overline{t}_{\tau _{\overline{\tau}_{\overline{T}}}}}, \\
&&\eta _1(\gamma _2(u, \gamma _1(u', u'')_T), \gamma _2(\gamma _1(u, \gamma _1(u', u'')_T), 
\gamma _2(u', u'')_J)_{\overline{J}})_j)\\
&\overset{(\ref{lrc12})}{=}&
\eta _1(\gamma _1(\gamma _1(u, \gamma _1(u', u'')_T)_{\overline{t}}, 
\gamma _2(u', u''))_{\tau _{\overline{\tau}_{\overline{T}}}}, \\
&&\eta _1(\gamma _2(u, \gamma _1(u', u'')_T), \gamma _2(\gamma _1(u, \gamma _1(u', u'')_T)_{\overline{t}}, 
\gamma _2(u', u''))_{J_{\overline{J}}})_j)\\
&&\nat \eta _2(\gamma _1(\gamma _1(u, \gamma _1(u', u'')_T)_{\overline{t}}, 
\gamma _2(u', u''))_{\tau _{\overline{\tau}_{\overline{T}}}}, \\
&&\eta _1(\gamma _2(u, \gamma _1(u', u'')_T), \gamma _2(\gamma _1(u, \gamma _1(u', u'')_T)_{\overline{t}}, 
\gamma _2(u', u''))_{J_{\overline{J}}})_j)\\
&&h_j\eta _2(\gamma _2(u, \gamma _1(u', u'')_T), \gamma _2(\gamma _1(u, \gamma _1(u', u'')_T)_{\overline{t}}, 
\gamma _2(u', u''))_{J_{\overline{J}}})\gamma _3(u, \gamma _1(u', u'')_T)_{\overline{J}}h'_{\overline{t}_J}\\
&&\gamma _3(\gamma _1(u, \gamma _1(u', u'')_T)_{\overline{t}}, 
\gamma _2(u', u''))
\gamma _3(u', u'')_{\tau }h''_{T_{\overline{\tau }}}\\
&&\eta _3(\gamma _2(u, \gamma _1(u', u'')_T), \gamma _2(\gamma _1(u, \gamma _1(u', u'')_T)_{\overline{t}}, 
\gamma _2(u', u''))_{J_{\overline{J}}})_{\overline{T}}\\
&&\eta _3(\gamma _1(\gamma _1(u, \gamma _1(u', u'')_T)_{\overline{t}}, 
\gamma _2(u', u''))_{\tau _{\overline{\tau}_{\overline{T}}}}, \\
&&\eta _1(\gamma _2(u, \gamma _1(u', u'')_T), \gamma _2(\gamma _1(u, \gamma _1(u', u'')_T)_{\overline{t}}, 
\gamma _2(u', u''))_{J_{\overline{J}}})_j)\\
&\overset{(\ref{lrc9})}{=}&
\eta _1(\gamma _1(\gamma _1(u, \gamma _1(u', u'')_T)_{\overline{t}}, 
\gamma _2(u', u''))_{\tau _{\overline{\tau}_{\overline{T}}}}, \\
&&\eta _1(\gamma _2(u, \gamma _1(u', u'')_T)_j, \gamma _2(\gamma _1(u, \gamma _1(u', u'')_T)_{\overline{t}}, 
\gamma _2(u', u''))_{J_{\overline{J}_{\overline{j}}}}))\\
&&\nat \eta _2(\gamma _1(\gamma _1(u, \gamma _1(u', u'')_T)_{\overline{t}}, 
\gamma _2(u', u''))_{\tau _{\overline{\tau}_{\overline{T}}}}, \\
&&\eta _1(\gamma _2(u, \gamma _1(u', u'')_T)_j, \gamma _2(\gamma _1(u, \gamma _1(u', u'')_T)_{\overline{t}}, 
\gamma _2(u', u''))_{J_{\overline{J}_{\overline{j}}}}))\\
&&\eta _2(\gamma _2(u, \gamma _1(u', u'')_T)_j, \gamma _2(\gamma _1(u, \gamma _1(u', u'')_T)_{\overline{t}}, 
\gamma _2(u', u''))_{J_{\overline{J}_{\overline{j}}}})\\
&&h_{j_{\overline{j}}}\gamma _3(u, \gamma _1(u', u'')_T)_{\overline{J}}h'_{\overline{t}_J}
\gamma _3(\gamma _1(u, \gamma _1(u', u'')_T)_{\overline{t}}, 
\gamma _2(u', u''))\gamma _3(u', u'')_{\tau }h''_{T_{\overline{\tau }}}\\
&&\eta _3(\gamma _2(u, \gamma _1(u', u'')_T)_j, \gamma _2(\gamma _1(u, \gamma _1(u', u'')_T)_{\overline{t}}, 
\gamma _2(u', u''))_{J_{\overline{J}_{\overline{j}}}})_{\overline{T}}\\
&&\eta _3(\gamma _1(\gamma _1(u, \gamma _1(u', u'')_T)_{\overline{t}}, 
\gamma _2(u', u''))_{\tau _{\overline{\tau}_{\overline{T}}}}, \\
&&\eta _1(\gamma _2(u, \gamma _1(u', u'')_T)_j, \gamma _2(\gamma _1(u, \gamma _1(u', u'')_T)_{\overline{t}}, 
\gamma _2(u', u''))_{J_{\overline{J}_{\overline{j}}}}))\\
&\overset{(\ref{lrc5}), \;(\ref{lrc6})}{=}&
\eta _1(\gamma _1(\gamma _1(u, \gamma _1(u', u'')_T)_{\overline{t}}, 
\gamma _2(u', u''))_{\tau _{\overline{T}}}, \\
&&\eta _1(\gamma _2(u, \gamma _1(u', u'')_T)_j, \gamma _2(\gamma _1(u, \gamma _1(u', u'')_T)_{\overline{t}}, 
\gamma _2(u', u''))_{J_{\overline{J}}}))\\
&&\nat \eta _2(\gamma _1(\gamma _1(u, \gamma _1(u', u'')_T)_{\overline{t}}, 
\gamma _2(u', u''))_{\tau _{\overline{T}}}, \\
&&\eta _1(\gamma _2(u, \gamma _1(u', u'')_T)_j, \gamma _2(\gamma _1(u, \gamma _1(u', u'')_T)_{\overline{t}}, 
\gamma _2(u', u''))_{J_{\overline{J}}}))\\
&&\eta _2(\gamma _2(u, \gamma _1(u', u'')_T)_j, \gamma _2(\gamma _1(u, \gamma _1(u', u'')_T)_{\overline{t}}, 
\gamma _2(u', u''))_{J_{\overline{J}}})\\
&&[h_{j}\gamma _3(u, \gamma _1(u', u'')_T)]_{\overline{J}}h'_{\overline{t}_J}
\gamma _3(\gamma _1(u, \gamma _1(u', u'')_T)_{\overline{t}}, 
\gamma _2(u', u''))[\gamma _3(u', u'')h''_{T}]_{\tau }\\
&&\eta _3(\gamma _2(u, \gamma _1(u', u'')_T)_j, \gamma _2(\gamma _1(u, \gamma _1(u', u'')_T)_{\overline{t}}, 
\gamma _2(u', u''))_{J_{\overline{J}}})_{\overline{T}}\\
&&\eta _3(\gamma _1(\gamma _1(u, \gamma _1(u', u'')_T)_{\overline{t}}, 
\gamma _2(u', u''))_{\tau _{\overline{T}}}, \\
&&\eta _1(\gamma _2(u, \gamma _1(u', u'')_T)_j, \gamma _2(\gamma _1(u, \gamma _1(u', u'')_T)_{\overline{t}}, 
\gamma _2(u', u''))_{J_{\overline{J}}}))\\
&\overset{(\ref{lrc15})}{=}&\eta _1(\gamma _1(\gamma _1(u, u')_{\overline{t}}, 
\gamma _2(\gamma _2(u, u')_j, u''))_{\tau _{\overline{T}}}, \\
&&\eta _1(\gamma _1(\gamma _2(u, u')_j, u'')_T, \gamma _2(\gamma _1(u, u')_{\overline{t}}, 
\gamma _2(\gamma _2(u, u')_j, u''))_{J_{\overline{J}}}))\\
&&\nat \eta _2(\gamma _1(\gamma _1(u, u')_{\overline{t}}, 
\gamma _2(\gamma _2(u, u')_j, u''))_{\tau _{\overline{T}}}, \\
&&\eta _1(\gamma _1(\gamma _2(u, u')_j, u'')_T, \gamma _2(\gamma _1(u, u')_{\overline{t}}, 
\gamma _2(\gamma _2(u, u')_j, u''))_{J_{\overline{J}}}))\\
&&\eta _2(\gamma _1(\gamma _2(u, u')_j, u'')_T, \gamma _2(\gamma _1(u, u')_{\overline{t}}, 
\gamma _2(\gamma _2(u, u')_j, u''))_{J_{\overline{J}}})\\
&&[h_j\gamma _3(u, u')]_{\overline{J}}h'_{\overline{t}_J}\gamma _3(\gamma _1(u, u')_{\overline{t}}, 
\gamma _2(\gamma _2(u, u')_j, u''))[\gamma _3(\gamma _2(u, u')_j, u'')h''_T]_{\tau }\\
&&\eta _3(\gamma _1(\gamma _2(u, u')_j, u'')_T, \gamma _2(\gamma _1(u, u')_{\overline{t}}, 
\gamma _2(\gamma _2(u, u')_j, u''))_{J_{\overline{J}}})_{\overline{T}}\\
&&\eta _3(\gamma _1(\gamma _1(u, u')_{\overline{t}}, 
\gamma _2(\gamma _2(u, u')_j, u''))_{\tau _{\overline{T}}}, \\
&&\eta _1(\gamma _1(\gamma _2(u, u')_j, u'')_T, \gamma _2(\gamma _1(u, u')_{\overline{t}}, 
\gamma _2(\gamma _2(u, u')_j, u''))_{J_{\overline{J}}}))
\end{eqnarray*}
and we can see that the two terms are equal. 
\end{proof}
\begin{remark}
If $U\nat _{J, T, \gamma , \eta }\;H$ is an L-R-crossed product, then we have 
\begin{eqnarray*}
&&(1_U\nat h)(1_U\nat h')=1_U\nat hh', \;\;\;
(u\nat h)(1_U\nat h')=u_T\nat hh'_T, \;\;\;
(1_U\nat h)(u'\nat 1_H)=u'_J\nat h_J, 
\end{eqnarray*}
for all $u, u'\in U$ and $h, h'\in H$.
\end{remark}
\section{Examples}\selabel{3}
\setcounter{equation}{0}
\begin{example}{\em
Let $H$ be a quasi-bialgebra and let $\mathcal{A}$ be an $H$-bimodule algebra, with notation as in \cite{pvo}. Define the linear maps
\begin{eqnarray*}
&&J:H\ot \mathcal{A}\rightarrow \mathcal{A}\ot H, \;\;\;J(h\ot \varphi )=h_1\cdot \varphi \ot h_2, \\
&&T:\mathcal{A}\ot H\rightarrow \mathcal{A}\ot H, \;\;\;T(\varphi \ot h)=\varphi \cdot h_2\ot h_1, \\
&&\gamma : \mathcal{A}\ot \mathcal{A}\rightarrow \mathcal{A}\ot \mathcal{A}\ot H, \;\;\;\gamma (\varphi \ot \varphi ')= \varphi \cdot z^3\ot z^1\cdot \varphi '\ot z^2,\\
&&\eta : \mathcal{A}\ot \mathcal{A}\rightarrow \mathcal{A}\ot H\ot H, \;\;\;\eta (\varphi \ot \varphi ')=
(x^1\cdot \varphi \cdot y^2)(x^2\cdot \varphi '\cdot y^3)\ot x^3\ot y^1, 
\end{eqnarray*}
for all $h\in H$ and $\varphi , \varphi '\in \mathcal{A}$. Then one can check that these maps satisfy the conditions (\ref{lrc1})-(\ref{lrc15}), so we can consider the L-R-crossed product $\mathcal{A}\nat _{J, T, \gamma , \eta }\;H$. One can check that its multiplication boils down to 
$$(\varphi \ot h)(\varphi '\ot h')=(x^1\cdot \varphi \cdot z^3h'_2y^2)(x^2h_1z^1\cdot \varphi '\cdot y^3)\ot x^3h_2z^2h'_1y^1,$$
which is exactly the multiplication of the L-R-smash product $\mathcal{A}\nat H$ introduced in \cite{pvo}.
} 
\end{example}
\begin{example}{\em
Let $A\; _Q\otimes _RB$ be an L-R-twisted tensor product between the associative unital algebras $A$ and $B$ as in \cite{ciungu}. Define the linear maps 
\begin{eqnarray*}
&&J:B\ot A\rightarrow A\ot B, \;\;\;J=R,\\
&&T:A\ot B\rightarrow A\ot B, \;\;\;T=Q, \\
&&\gamma :A\ot A\rightarrow A\ot A\ot B, \;\;\;\gamma (a\ot a')= a\ot a'\ot 1_B,\\
&&\eta : A\ot A\rightarrow A\ot B\ot B, \;\;\;\eta (a\ot a')=aa'\ot 1_B\ot 1_B, 
\end{eqnarray*}
for all $a, a'\in A$. Then one can check that these maps satisfy the conditions (\ref{lrc1})-(\ref{lrc15}), so we can consider the L-R-crossed product $A\nat _{J, T, \gamma , \eta }\;B$. One can check that its multiplication reduces to 
$$(a\ot b)(a'\ot b')=a_Qa'_R\ot b_Rb'_Q,$$
which is exactly the multiplication of $A\; _Q\otimes _RB$.}
\end{example}
\begin{example}{\em
Let $W\overline{\otimes}_{P, \nu }B$ be a mirror version of a Brzezi\'{n}ski crossed product, as in  \cite{iterated}. Define the linear maps 
\begin{eqnarray*}
&&J:B\ot W\rightarrow W\ot B, \;\;\;J=P,\\
&&T:W\ot B\rightarrow W\ot B, \;\;\;T=id, \\
&&\gamma :W\ot W\rightarrow W\ot W\ot B, \;\;\;\gamma (w\ot w')= w\ot w'\ot 1_B,\\
&&\eta : W\ot W\rightarrow W\ot B\ot B, \;\;\;\eta (w\ot w')=\nu (w\ot w')\ot 1_B, 
\end{eqnarray*}
for all $w, w'\in W$. Then one can check that these maps satisfy the conditions (\ref{lrc1})-(\ref{lrc15}), so we can consider the L-R-crossed product $W\nat _{J, T, \gamma , \eta }\;B$. One can check that its multiplication becomes 
$$(w\ot b)(w'\ot b')=\nu _1(w, w'_P)\ot \nu _2(w, w'_P)b_Pb',$$
which is exactly the multiplication of $W\overline{\otimes}_{P, \nu }B$.  }
\end{example}
\begin{example}{\em
Let $H$ be an associative unital algebra, let $V$ and $W$ be two linear spaces with distinguished elements $1_V\in V$ and $1_W\in W$, and assume that we have a Brzezi\'{n}ski crossed product $H\ot _{R, \sigma }V$ and a mirror version Brzezi\'{n}ski crossed product  $W\overline{\otimes}_{P, \nu }H$, with notation as in \cite{iterated}. Assume moreover that we have a linear map $Q:V\ot W\rightarrow W\ot H\ot V$ satisfying the conditions in Theorem 2.3 in \cite{iterated}, therefore the two crossed products 
$W\overline{\otimes}_{P, \nu }H$ and $H\ot _{R, \sigma }V$ can be iterated, so we have an associative algebra structure on 
$W\ot H\ot V$. 

Consider now the linear space $U=W\ot V$, with distinguished element $1_U=1_W\ot 1_V$, and define the linear maps 
\begin{eqnarray*}
&&J:H\ot (W\ot V)\rightarrow (W\ot V)\ot H, \;\;\;J(h\ot (w\ot v))=(w_P\ot v)\ot h_P, \\
&&T:(W\ot V)\ot H\rightarrow (W\ot V)\ot H, \;\;\;T((w\ot v)\ot h)=(w\ot v_R)\ot h_R, \\
&&\gamma :(W\ot V)\ot (W\ot V)\rightarrow (W\ot V)\ot (W\ot V)\ot H, \\
&&\;\;\;\;\;\;\;\;\gamma ((w\ot v)\ot (w'\ot v'))= (w\ot Q_V(v, w'))\ot (Q_W(v, w')\ot v')\ot Q_H(v, w'), \\
&&\eta : (W\ot V)\ot (W\ot V)\rightarrow (W\ot V)\ot H\ot H, \\
&&\;\;\;\;\;\;\;\;\eta ((w\ot v)\ot (w'\ot v'))=(\nu _1(w, w')\ot \sigma _2(v, v'))\ot \nu_2(w, w')\ot \sigma _1(v, v'), 
\end{eqnarray*}
for all $v, v'\in V$ and $w, w'\in W$. Then one can check that these maps satisfy the conditions (\ref{lrc1})-(\ref{lrc15}), so we can consider the L-R-crossed product $(W\ot V)\nat _{J, T, \gamma , \eta }\;H$. One can check that its multiplication becomes 
\begin{eqnarray*}
((w\ot v)\ot h)((w'\ot v')\ot h')&=&\nu _1(w, Q_W(v, w')_P)\ot  \sigma _2(Q_V(v, w')_R, v')\ot \\
&&\;\;\;\;\;\nu _2(w, Q_W(v, w')_P)h_PQ_H(v, w')h'_R\sigma _1(Q_V(v, w')_R, v'), 
\end{eqnarray*}
and, up to the canonical linear isomorphism $W\ot V\ot H\simeq W\ot H\ot V$, this coincides with the multiplication of the iterated crossed product structure on $W\ot H\ot V$ (see the proof of (iii) in Theorem 2.3 in \cite{iterated}).  }
\end{example}

\begin{center}
ACKNOWLEDGEMENTS
\end{center}
The author was partially supported by a grant from UEFISCDI,
project number PN-III-P4-PCE-2021-0282.


\end{document}